\DeclareMathOperator \re {Re}
\DeclareMathOperator \im {Im}
\DeclareMathOperator \supp {supp}
\newtheorem{prop}{Proposition}
\newtheorem{lem}[prop]{Lemma}
\newtheorem*{thm}{Theorem}
\numberwithin{equation}{section}
\numberwithin{prop}{section}
\numberwithin{figure}{section}
\author{Kiril Datchev}
\email{datchev@math.mit.edu}
\address{Department of Mathematics, MIT,
Cambridge, MA 02139, USA}
\title[Quantitative limiting absorption principle in the semiclassical limit] {Quantitative limiting absorption principle in the semiclassical limit}
\begin{document}
\begin{abstract} We give an elementary proof of Burq's  resolvent bounds for  long range semiclassical Schr\"odinger operators. 
Globally, the resolvent norm grows exponentially in the inverse semiclassical parameter, and near infinity it grows linearly. We also weaken the regularity assumptions on the potential.
\end{abstract}
\maketitle

\addtocounter{section}{1}

\noindent Let $\Delta\le 0$ be the Laplacian on $\mathbb R^n$, $n  \ne 2$, and let $E>0$. 
Let
\begin{equation}\label{e:pdef}
P = P_h :=  -h^2 \Delta + V - E, \qquad h >0,
\end{equation}
where, using  polar coordinates $(r,\omega) \in (0,\infty)\times \mathbb S^{n-1}$, we suppose that  $V = V_h(r,\omega)$  and its distributional  derivative $\partial_r V$ are in  $L^\infty ((0,\infty)\times\mathbb S^{n-1})$. Suppose futher  that
 \begin{equation}\label{e:vbound}
 V \le (1+r)^{-\delta_0}, \qquad \partial_r V \le (1+r)^{-1-\delta_0},
 \end{equation}
 for some $\delta_0>0$.
Since $V \in L^\infty(\mathbb R^n)$,  the resolvent $(P-i\varepsilon)^{-1}$ is defined  $L^2(\mathbb R^n) \to H^2(\mathbb R^n)$ for  $\varepsilon >0$ by the Kato--Rellich theorem. 
We prove the following weighted resolvent bounds:

\begin{thm}
For any $s>1/2$ there are $C, R_0, h_0>0$ such that
\begin{equation}\label{e:t1}
\left\| (1+r)^{-s} (P -i\varepsilon)^{-1} (1+r)^{-s}  \right\|_{L^2(\mathbb R^n) \to L^2(\mathbb R^n)}  \le e^{C/h},
\end{equation}
\begin{equation}\label{e:t2}
\left\|(1+r)^{-s}  \mathbf{1}_{\ge R_0} (P-i\varepsilon)^{-1} \mathbf{1}_{\ge R_0}(1+r)^{-s} \right\|_{L^2(\mathbb R^n) \to L^2(\mathbb R^n)} \le C / h,
\end{equation}
 for all $\varepsilon  >0 $, $h \in (0,h_0]$, where $\mathbf{1}_{\ge R_0}$ is the characteristic function of $\{x \in \mathbb R^n \colon |x| \ge R_0\}$.
\end{thm}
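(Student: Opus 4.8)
\emph{Reduction and set-up.} It suffices to prove the two inequalities as a priori bounds: if $u\in H^2(\RR^n)$ solves $(P-i\varepsilon)u=f$ then $\|(1+r)^{-s}u\|\le e^{C/h}\|(1+r)^s f\|$, and likewise the exterior version \eqref{e:t2}; the resolvent bounds follow on taking $u=(P-i\varepsilon)^{-1}f$. Passing to polar coordinates and conjugating by $r^{(n-1)/2}$ replaces $P$ by $\wt P:=-h^2\partial_r^2+r^{-2}\Lambda+V-E$ on $L^2((0,\infty)\times\mathbb S^{n-1},dr\,d\omega)$, where $\Lambda:=-h^2\Delta_\omega+h^2(n-1)(n-3)/4$; this is exactly where $n\ne 2$ enters, since then $\Lambda\ge 0$. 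Write $v=r^{(n-1)/2}u$, $g=r^{(n-1)/2}f$; for $u\in H^2$ the boundary contributions at $r=0$ vanish. Two real identities, in which $\wt Pv=g+i\varepsilon v$, drive everything: the virial identity $2\re\langle\wt Pv,Av\rangle=\langle[\wt P,A]v,v\rangle$ with the skew-adjoint multiplier $A=\tfrac12(\beta\partial_r+\partial_r\beta)$, whose right-hand side equals $2h^2\int\beta'|\partial_r v|^2+2\int r^{-3}\beta\,\|\Lambda^{1/2}v\|_\omega^2-\int\beta(\partial_r V)|v|^2-\tfrac{h^2}2\int\beta'''|v|^2$; and the energy identity $\re\langle\wt Pv,\gamma v\rangle=\int\gamma\big(h^2|\partial_r v|^2+r^{-2}\|\Lambda^{1/2}v\|_\omega^2+(V-E)|v|^2\big)-\tfrac{h^2}2\int\gamma''|v|^2$. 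Throughout, the $\varepsilon$-contributions are controlled by the elementary fact $\varepsilon\|v\|^2=-\im\langle g,v\rangle\le|\langle g,v\rangle|$ together with Cauchy--Schwarz against the weights.

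\emph{The exterior estimate \eqref{e:t2}.} By monotonicity of the weights in $s$ it is enough to treat $s$ close to $1/2$, in particular with $2s-1<\delta_0$. Fix $R_0$ so large that $V\le(1+r)^{-\delta_0}\le E/2$ for $r\ge R_0$, take $\beta\ge 0$ smooth, vanishing for $r\le R_0$, with $\beta'(r)=(1+r)^{-2s}$ for $r\ge R_0+1$, and set $\gamma:=\beta'$; then both multipliers are supported in $\{r\ge R_0\}$ and no boundary terms appear. Adding a large multiple $C_0$ of the energy identity to the virial identity: the term $-\int\gamma(V-E)|v|^2\ge\tfrac E2\int\gamma|v|^2$ produces a positive multiple of $\|(1+r)^{-s}\mathbf{1}_{\ge R_0}v\|^2$; the first-derivative term it generates is exactly $h^2\int\beta'|\partial_r v|^2$, absorbed by the virial; the error $\int\beta(\partial_r V)|v|^2$ is controlled using $\partial_r V\le(1+r)^{-1-\delta_0}\le(1+r)^{-2s}$ (this is where $2s-1<\delta_0$ is used), and $\int|\beta'''||v|^2$, $\int|\gamma''||v|^2$ are $O((1+r)^{-2s})$ and likewise absorbed. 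The angular term $r^{-2}\gamma\|\Lambda^{1/2}v\|_\omega^2$ is dominated by the virial's $r^{-3}\beta\|\Lambda^{1/2}v\|_\omega^2$ wherever $r\beta'\lesssim\beta$, which fails only on $\{R_0\le r\le R_0+1\}$ where $\beta$ degenerates; there, a short separate application of the energy identity against a cut-off bounds $\int r^{-2}\|\Lambda^{1/2}v\|_\omega^2$ by data plus $\|v\|_{L^2}^2$ near $R_0$. What remains is $\re\langle g,(2A+C_0\gamma)v\rangle\le Ch^{-2}\|(1+r)^s\mathbf{1}_{\ge R_0}g\|^2+(\text{a fraction of the left side})$, which yields $\|(1+r)^{-s}\mathbf{1}_{\ge R_0}v\|\le Ch^{-1}\|(1+r)^s\mathbf{1}_{\ge R_0}g\|$, i.e.\ \eqref{e:t2}. (Run with the full $g$, the same argument gives $\|(1+r)^{-s}\mathbf{1}_{\ge R_0}v\|\lesssim h^{-1}\|(1+r)^s g\|$, used below.)

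\emph{The interior estimate and \eqref{e:t1}.} On a bounded set $\{r\le R_1\}$ with $R_1>R_0$ the sign of $V-E$ is not controlled (trapping), so there I use a semiclassical Carleman estimate: for a radial weight $\varphi=\varphi(r)$ that is increasing and suitably convex on $[0,R_1]$ — so that in the conjugation $e^{\varphi/h}\wt Pe^{-\varphi/h}=Q_2+Q_1$ into formally self- and skew-adjoint parts the term $\langle[Q_2,Q_1]w,w\rangle$ is positive, the nonnegative potential $r^{-2}\Lambda$ contributing $+4h\int\varphi'r^{-3}\|\Lambda^{1/2}w\|_\omega^2\ge 0$ since $\varphi'>0$, and only the upper-bounded $\partial_r V\le(1+r)^{-1-\delta_0}$ needing to be dominated by the convexity — one has $\|e^{\varphi/h}(\wt P-i\varepsilon)w\|\ge c\,h^{N}\|e^{\varphi/h}w\|$ for all $w$ supported in $\{r\le R_1\}$ and a fixed $N$; the loss $h^{-N}$ is harmless since the target is $e^{C/h}$. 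Applying this to $w=\chi v$ with $\chi=1$ on $\{r\le R_1-1\}$ and supported in $\{r\le R_1\}$, and using $(\wt P-i\varepsilon)(\chi v)=\chi g+[\wt P,\chi]v$ with $[\wt P,\chi]$ a first-order operator supported in the shell $\{R_1-1\le r\le R_1\}\subset\{r>R_0\}$, one gets $\|v\|_{L^2(r\le R_1-1)}\le e^{C/h}\big(\|g\|+\|v\|_{H^1(R_1-1,R_1)}\big)$, and the last term is bounded, via the exterior estimate and elliptic regularity for $\wt P$ away from $r=0$, by $Ch^{-2}\|(1+r)^s g\|$. Combining with \eqref{e:t2} and undoing the conjugation (the factor $r^{(n-1)/2}$ and the change of measure cancel, so the $L^2$-norms in the two pictures agree) gives \eqref{e:t1}.

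\emph{Main obstacle.} The conceptual ingredients are classical, so the difficulty is organizational: choosing the weights $\beta,\gamma$ and the Carleman weight $\varphi$ so that the virial, energy and Carleman identities cooperate — in particular handling the angular term $r^{-2}\beta'\|\Lambda^{1/2}v\|_\omega^2$ near $r=R_0$, where $\beta$ is forced to degenerate — and gluing the interior estimate to the exterior one through the shell $\{R_1-1\le r\le R_1\}$ without circularity, which is why the Carleman region is taken strictly larger than the exterior region; and, throughout, keeping every error term first order in derivatives of $V$, so that only $\partial_r V\in L^\infty$ with the one-sided bound $\partial_r V\le(1+r)^{-1-\delta_0}$ is ever invoked, which is what permits the weak regularity hypothesis. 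Controlling the $\varepsilon$-dependent terms uniformly in $\varepsilon>0$ via $\varepsilon\|v\|^2\le|\langle g,v\rangle|$ is a recurring but routine point.
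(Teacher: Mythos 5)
Your architecture (a Morawetz/virial estimate purely in the exterior region, glued to a compactly supported Carleman estimate) is genuinely different from the paper's, which proves a \emph{single global} estimate: a weight $w=1-(1+r)^{-\delta}$ vanishing only at $r=0$, the function $F(r)=\|h\partial_r u\|_S^2-\langle(\Lambda+V_\varphi-E)u,u\rangle_S$, and a Carleman exponential $e^{\varphi/h}$ whose derivative is supported in $[0,R_0]$; the exterior bound \eqref{e:t2} then falls out because $e^{\varphi/h}$ is \emph{constant} on $\{r\ge R_0\}$ and cancels between the two sides. The reason the paper is organized this way is precisely the point where your exterior argument breaks down, namely the angular term at the inner boundary $r=R_0$.

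Concretely: your energy identity forces you to bound $\int\gamma\, r^{-2}\|\Lambda^{1/2}v\|_\omega^2$ (with $\gamma=\beta'$) from above by the virial's positive term $2\int r^{-3}\beta\|\Lambda^{1/2}v\|_\omega^2$, which requires $r\beta'(r)\lesssim\beta(r)$. Since $\beta$ vanishes on $\{r\le R_0\}$ while $r\approx R_0$ there, the ratio $r\beta'/\beta$ blows up as $r\to R_0^+$, and the region where the comparison fails has size comparable to $R_0$, not to $1$. Your proposed repair --- an energy identity against a cutoff $\chi$ on the transition shell --- yields only
\[
\int \chi^2 r^{-2}\|\Lambda^{1/2}v\|_\omega^2 \;\le\; \big(E+\sup|V|+O(h^2)\big)\,\|v\|^2_{L^2(\supp\chi)}+\big|\langle g,\chi^2 v\rangle\big|+\dots,
\]
so the resulting error in your main inequality is of size $\sup_{\rm shell}\gamma\cdot(E+\sup|V|)\,\|v\|^2_{L^2(\rm shell)}$, whereas the only positive term available to absorb it is $\tfrac{E}{2}\int\gamma|v|^2\ge \tfrac{E}{2}\inf_{\rm shell}\gamma\,\|v\|^2_{L^2(\rm shell)}$. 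Absorption would need $\tfrac{E}{2}\inf\gamma > (E+\sup|V|)\sup\gamma$, which is impossible even before noting that $\inf_{\rm shell}\gamma=\gamma(R_0)=0$; there is no factor of $h$ or of $R_0^{-1}$ to save you. (For $n=1$ the issue disappears since $\Lambda=0$, but for $n\ge3$ this is the crux.) The paper avoids it because its weight degenerates only at $r=0$, where the coefficient $2w r^{-1}-w'$ of $\langle\Lambda u,u\rangle$ stays nonnegative thanks to the $r^{-1}$; a weight vanishing at $r=R_0>0$ cannot have this property. Until you produce a multiplier for which the shell contribution is actually controlled --- or abandon the exterior-only estimate in favor of a global one --- both \eqref{e:t2} and, since your proof of \eqref{e:t1} feeds the shell term back through the exterior estimate, \eqref{e:t1} remain unproved. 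The remaining ingredients (the interior Carleman estimate with convex radial weight using only the one-sided bound on $\partial_r V$, the treatment of $\varepsilon$, the gluing) are plausible, though the passage from $v\in C_0^\infty$ to $u=(P-i\varepsilon)^{-1}f\in H^2$ also deserves the approximation argument the paper carries out via \eqref{e:bothbounded}.
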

This Theorem was first proved by Burq \cite{bu0, bu}, who required $V$ to be smooth, but allowed it  to be a differential operator  on an exterior domain $\mathbb R^n \setminus \overline {\mathcal O}$, $n \ge 1$. Different proofs 
were found by Sj\"ostrand \cite{sres} and Vodev \cite{vo0}. Cardoso and Vodev \cite{cv} gave a version for  manifolds with asymptotically conic or hyperbolic ends, and, most  recently, Rodnianski and Tao \cite{rt} considered Schr\"odinger operators on asymptotically conic manifolds, 
obtaining also bounds for low energies and other refinements.
Here we consider only operators of the form \eqref{e:pdef}, with $n \ne 2$, in order to stress the elementary nature of the proof and to present the ideas in the simplest setting; however, the assumption \eqref{e:vbound} is mild, and our method  should also give simplifications and low regularity results in more general cases.

Our proof is closest in spirit to that of Cardoso and Vodev \cite{cv} (see also \cite{voasy, vo}). The novelty is a \textit{global} Carleman estimate of the form
\[
\left\| (1+r)^{-s} e^{\varphi/h}v\right\|^2_{L^2(\mathbb R^n)}  \le \frac {C}{h^2}\left\|(1+r)^{s}e^{\varphi/h}(P-i\varepsilon)v\right\|^2_{L^2(\mathbb R^n)} + \frac {C\varepsilon}h \|e^{\varphi/h}v\|^2_{L^2(\mathbb R^n)},
\]
with  $C$ independent of the support of $v$, and with $\varphi = \varphi(r)$  nondecreasing and constant outside of a compact set: see Lemma \ref{l:carl}. 
Carleman estimates are crucial in all the proofs mentioned above, and one nice feature of our approach is that in this setting the construction of $\varphi$ is relatively simple: see Lemma~\ref{l:weights}.

The $h$ dependence in \eqref{e:t1} is optimal in general, but improvements hold under dynamical assumptions on the Hamilton flow $\Phi(t) = \exp t(2 \xi \partial_x - \partial_x V(x)  \partial_\xi) $ on $T^*\mathbb R^n$. (Note, however, that  $\Phi$ may be undefined under our regularity assumptions.) See \cite{wu} for a recent survey, and \cite{dy, nz,ch} for more recent results in this active area.  For example,  if $\Phi$ is \textit{nontrapping}  at energy $E$ (e.g. if $V \equiv 0$), then \eqref{e:t1} can be replaced by
\begin{equation}\label{e:nontrap}
\left\| (1+r)^{-s} (P -i\varepsilon)^{-1} (1+r)^{-s}  \right\|_{L^2(\mathbb R^n) \to L^2(\mathbb R^n)}  \le C/h.
\end{equation}
In this sense \eqref{e:t2} says that applying $\mathbf{1}_{\ge R_0}$ cutoffs removes the loss exhibited by \eqref{e:t1} compared to \eqref{e:nontrap}. 
It would be interesting to know if some improvement over \eqref{e:t1} persists if we remove one of the $\mathbf{1}_{\ge R_0}$ factors from \eqref{e:t2}, and if $\mathbf{1}_{\ge R_0}$ can be replaced by a finer cutoff; for some  results in this direction, see \cite{dv, rt, hv}.  For example, in \cite{dv}, Vasy and I show that if $\Phi$ is  `mildly'  trapping then \eqref{e:t2} holds with $\mathbf{1}_{\ge R_0}$ replaced by a microlocal cutoff vanishing only on an arbitrarily small neighborhood of the trapped set.

In \cite{voasy, vo}, Vodev studied operators of the form \eqref{e:pdef},  satisfying \eqref{e:vbound}, but with  $V$ replaced by $h^\nu V$ for some $\nu>0$; he showed that in that case the bound \eqref{e:nontrap} holds. He also allowed $V$ to contain a magnetic term and a less regular short range term. 

I am grateful to Maciej Zworski for encouraging me to write this note, and for many very helpful discussions and suggestions. Thanks also to Nicolas Burq for  pointing out a problem with an earlier version of this argument.
Thanks finally to Georgi Vodev for sharing his preprint \cite{vo}, which gave me the initial idea for the proof.  I am also grateful for the support of a National Science Foundation postdoctoral fellowship.

\section{Proof of Theorem}

\noindent We begin with two lemmas; the first constructs a nondecreasing Carleman weight for $P$ which is constant outside of a compact set, and the second uses this weight to prove a global Carleman estimate. Without loss of generality, we  assume $0<2s-1<\delta_0<1$.  Put
\[
\delta  :=  2s - 1<\delta_0, \qquad w = w_\delta(r) := 1 - (1+r)^{-\delta},\qquad m :=   (1+r^2)^{(1+\delta)/4}.
\]
  
\begin{lem}\label{l:weights} If $\delta>0$ is small enough, there are $h_0$, $R_0>0$, and  $\varphi = \varphi(r) \in C^\infty([0,\infty))$ with 
$\varphi' \ge 0$ and $ \supp \varphi' = [0,R_0]$, such that
\begin{equation}\label{e:lweights}
  \partial_r \left(w(r)(E-V_h(r,\omega) + \varphi'(r)^2 - h \varphi''(r))\right) \ge Ew'(r)/4,
\end{equation}
for all $h \in (0,h_0]$, $r>0$, $\omega \in \mathbb S^{n-1}$.
\end{lem}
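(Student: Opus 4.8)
The plan is to build $\varphi$ so that $\varphi'$ is supported in $[0,R_0]$, is large near the origin, and decays to zero as $r\to R_0$, while the two terms $w\cdot E$ and $w\cdot\varphi'^2$ together dominate the bad contributions $wV$, $w\varphi'^2$'s derivative, and $hw\varphi''$. First I would expand \eqref{e:lweights} by the product rule as
\[
w'\bigl(E - V + \varphi'^2 - h\varphi''\bigr) + w\bigl(-\partial_r V + 2\varphi'\varphi'' - h\varphi'''\bigr)\ \ge\ Ew'/4 .
\]
Since $w' = \delta(1+r)^{-1-\delta}>0$ and $w\le 1$, and since by \eqref{e:vbound} we have $V\le (1+r)^{-\delta_0}\le (1+r)^{-\delta}$ and $\partial_r V\le (1+r)^{-1-\delta_0}$, the terms $-w'V$ and $-w\partial_r V$ are controlled by $Ew'/4$ once $r$ is large (because $(1+r)^{-\delta}\to 0$, so $w'(E-V)\ge w'E/2$ for $r\ge R_0$ with $R_0$ large, independently of $\omega$ and $h$), and are harmless for small $r$ as long as $\varphi'$ is chosen large there. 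So the construction splits into a region $r\ge R_0$ where $\varphi'\equiv 0$ and the inequality reduces to $w'(E-V)\ge Ew'/4$, which holds for $R_0$ large; and a region $r\le R_0$ where $\varphi'>0$ must be engineered.

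On $[0,R_0]$ the natural choice is to prescribe $\varphi'$ directly rather than $\varphi$. I would look for $\varphi'$ of the form $\varphi'(r)^2 = A\bigl(g(r)-g(R_0)\bigr)$ for a suitable smooth decreasing $g$ and a large constant $A$, so that $\varphi'$ vanishes to infinite order at $R_0$ (ensuring $\varphi\in C^\infty$ and $\supp\varphi'=[0,R_0]$; the infinite-order vanishing also kills $\varphi''$, $\varphi'''$ at $R_0$ so the two regions match). The point of taking $A$ large is that the dominant term on the left is $\partial_r(w\varphi'^2) = w'\varphi'^2 + 2w\varphi'\varphi''$; choosing $g$ so that this combination is $\ge cA\,w'$ for some $c>0$ on $[0,R_0)$ — e.g. $g$ convex and decreasing so that $w'\varphi'^2$ alone beats the (negative but lower-order) cross term — makes the $\varphi'$-terms overwhelm everything else once $A$ is large. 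The $h$-terms $-hw\varphi''$ and $-hw\varphi'''$ are then absorbed for $h\le h_0$ small since they carry the prefactor $h$ while the good term does not. A clean concrete option is to fix $R_0$ first from the large-$r$ analysis, then on $[0,R_0]$ take $\varphi'(r) = \sqrt{A}\,\psi(r)$ with $\psi$ a fixed smooth cutoff equal to $1$ near $0$ and vanishing to infinite order at $R_0$, and verify that with $A$ large the inequality holds uniformly in $\omega$ and $h$.

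The main obstacle is the cross term $2w\varphi'\varphi''$: since $\varphi'$ must decrease from a large value to $0$ on $[0,R_0]$, $\varphi''<0$ somewhere, and this term is negative and of size comparable to $A$ (not $A$ times a small parameter), so it cannot simply be absorbed by largeness of $A$ alone. The resolution is to exploit the structure $\partial_r(w\varphi'^2)$: one wants to choose the profile of $\varphi'$ so that $w\varphi'^2$ is itself monotone, or more precisely so that its derivative stays $\ge (\text{const})\,w'$. Concretely, setting $f := w\varphi'^2$, the requirement becomes $f' \ge Ew'/4 - w'(E-V) - w(-\partial_r V - h\varphi''')+ h$-terms, i.e. $f'\gtrsim w'$ modulo small errors, so it suffices to take $f(r) = B\,(w(R_0)-w(r))$ for a large constant $B$ on $[0,R_0]$ — this is nonnegative, vanishes at $R_0$, and has $f' = -Bw' = -B\delta(1+r)^{-1-\delta}$... which has the wrong sign. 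Hence instead one wants $f$ increasing: take $\varphi'^2 = (\text{large const}) - (\text{something})$ arranged so $w\varphi'^2$ still increases, which is possible because $w$ itself increases; I would tune the two monotonicities against each other and push the remaining cross-term error into the freedom in $R_0$ and $h_0$. Getting these inequalities to close uniformly in $\omega\in\mathbb S^{n-1}$ is automatic since $V$ enters only through the one-sided bounds \eqref{e:vbound}, which are $\omega$-independent; the real bookkeeping is the interplay of the three scales $R_0$, $A$ (or $B$), and $h_0$, chosen in that order.
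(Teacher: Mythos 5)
You correctly isolate the hard part (the cross term $2w\varphi'\varphi''$ where $\varphi'$ must decay to zero) and even identify the right object to control, $f:=w\varphi'^2$; but your analysis of what $f$ must satisfy is wrong, and because of that you discard the ansatz that actually works and end without a construction. Expanding the left side of \eqref{e:lweights} as $Ew'-(wV)'+f'-h(w\varphi'')'$, the requirement is
\[
f'\ \ge\ -\tfrac{3E}{4}\,w'+(wV)'+h(w\varphi'')',
\]
and since $(wV)'=w'\bigl(V+(\partial_rV)\,w/w'\bigr)\le \tfrac{E}{4}w'$ once $r$ exceeds a suitable $R$, in the transition region one only needs $f'\ge -\tfrac{E}{2}w'$ modulo $O(h)$ errors: the free term $Ew'$ coming from $\partial_r(wE)$ gives you a \emph{negative} budget for $f'$. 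You instead assert ``$f'\gtrsim w'$ modulo small errors,'' conclude that $f(r)=B\,(w(R_0)-w(r))$ ``has the wrong sign,'' and pivot to wanting $f$ increasing --- which is impossible on all of $[0,R_0]$ for a nonnegative $f$ that is large near $0$ and vanishes at $R_0$. In fact $f(r)=\tfrac{E}{4}\bigl(w(R_0)-w(r)\bigr)$, i.e.\ $\varphi'^2=B/w-E/4$ with $B=\tfrac{E}{4}w(R_0)$, is precisely the paper's choice on the transition region: it gives $f'=-\tfrac{E}{4}w'$, which fits the budget exactly, and $w\varphi'^2$ being an affine function of $w$ is what makes the dangerous combination $w'\varphi'^2+2w\varphi'\varphi''$ exactly computable rather than merely estimated. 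Your alternative ($\varphi'=\sqrt{A}\,\psi$ with a fixed cutoff $\psi$ and $A$ large) fails for the reason you yourself note: where $\psi$ decreases, $f'\sim -A$ times a quantity not comparable to $w'$, so increasing $A$ makes the inequality worse.

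Two smaller gaps: (i) for small $r$ you say the $V$-terms are ``harmless as long as $\varphi'$ is large there,'' but the quantity to beat is $G_\delta(r)=V+(\partial_rV)w/w'$, whose trivial bound is of size $\delta^{-1}$; the paper's proof devotes a nontrivial computation to showing $\sup_r G_\delta\le\delta_0^{-1}$ for $\delta$ small, and some such uniform bound (or an explicit $\delta$-dependence of your constant $A$) must be supplied before ``take $A$ large'' is meaningful. (ii) The regularity of $\varphi$ needs care: the natural $\varphi'=\sqrt{\psi}$ has a square-root singularity where $\psi$ hits zero at $R_0$ and a kink at $R$, which the paper handles by mollifying ($\varphi'=\rho_\eta*\sqrt\psi$) and absorbing the resulting $O(h)$ errors using $h\le h_0$; your infinite-order-vanishing cutoff is a reasonable substitute in principle, but it must be reconciled with the quantitative profile that the transition region forces on $\varphi'^2$.
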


\begin{lem}\label{l:carl} Let  $\delta$, $h_0$, and $\varphi = \varphi(r)$ be as in Lemma \ref{l:weights}. There is $C >0$ such that
\begin{equation}\label{e:lcarl}
\left\| m^{-1} e^{\varphi/h}v\right\|^2_{L^2(\mathbb R^n)}  \le \frac {C}{h^2}\left\|me^{\varphi/h}(P-i\varepsilon)v\right\|^2_{L^2(\mathbb R^n)} + \frac {C\varepsilon}h \|e^{\varphi/h}v\|^2_{L^2(\mathbb R^n)},
\end{equation}
for all $v \in C_0^\infty(\mathbb R^n)$, $\varepsilon  \ge0$, and $h \in (0,h_0]$.
\end{lem}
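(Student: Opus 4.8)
The plan is to reduce \eqref{e:lcarl} to a one-dimensional weighted estimate and prove it with a single Rellich--Morawetz-type multiplier, using Lemma~\ref{l:weights} for the one crucial sign. Passing to polar coordinates and setting $u:=r^{(n-1)/2}e^{\varphi/h}v$ turns $-h^2\Delta$ into $-h^2\partial_r^2+h^2r^{-2}\mathcal L$ with $\mathcal L:=-\Delta_\omega+\tfrac{(n-1)(n-3)}{4}$; since $m$ depends only on $r$, the four norms in \eqref{e:lcarl} become the corresponding norms on $L^2\bigl((0,\infty)\times\mathbb S^{n-1},dr\,d\omega\bigr)$, and $(P-i\varepsilon)v=f$ becomes $P_\varphi u=g$ with $g:=r^{(n-1)/2}e^{\varphi/h}f$ and
\[
P_\varphi=-h^2\partial_r^2+2h\varphi'\partial_r+\frac{h^2}{r^2}\mathcal L+\bigl(V-E-(\varphi')^2+h\varphi''\bigr)-i\varepsilon .
\]
This is where $n\ne2$ enters: the eigenvalues of $\mathcal L$ are $k(k+n-2)+\tfrac{(n-1)(n-3)}{4}$, $k\ge0$, all $\ge0$, so $\mathcal L\ge0$ and $\mathcal L^{1/2}$ is well defined and commutes with $\partial_r$. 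Observe also that $-\bigl(V-E-(\varphi')^2+h\varphi''\bigr)$ is precisely the function differentiated in \eqref{e:lweights}.

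The heart of the argument is to pair $P_\varphi u=g$ with the \emph{bare} multiplier $w\,\partial_r u$ (with no zeroth-order correction term) and take twice the real part. Since $v$ has compact support and $w(0)=0$, every boundary term in the integrations by parts vanishes, and the identity reads
\[
h^2\!\int w'|\partial_r u|^2+4h\!\int\varphi' w\,|\partial_r u|^2+\int \frac{h^2(2w-rw')}{r^3}\,|\mathcal L^{1/2}u|^2+\int\partial_r\!\bigl(w(E-V+(\varphi')^2-h\varphi'')\bigr)|u|^2 = 2\Re\langle g,w\,\partial_r u\rangle-2\varepsilon\,\Im\langle u,w\,\partial_r u\rangle .
\]
The first two terms on the left are $\ge0$; the third is $\ge0$ because $\partial_r(2w-rw')=w'-rw''>0$ while $2w-rw'$ vanishes at $r=0$; and the fourth is $\ge\tfrac E4\int w'|u|^2$ by Lemma~\ref{l:weights}. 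Since $m^{-2}\le 2(1+r)^{-1-\delta}=2\delta^{-1}w'$, the left side is therefore $\ge c\,h^2\|m^{-1}\partial_r u\|^2+c\,\|m^{-1}u\|^2$ for some $c=c(\delta,E)>0$.

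To close the estimate, the term $2|\Re\langle g,w\,\partial_r u\rangle|\le2\|mg\|\,\|m^{-1}\partial_r u\|$ is absorbed into $c\,h^2\|m^{-1}\partial_r u\|^2$ by Young's inequality, at the cost of a $Ch^{-2}\|mg\|^2$ term. The current term $2\varepsilon|\Im\langle u,w\,\partial_r u\rangle|\le2\varepsilon\|u\|\,\|\partial_r u\|$ is the delicate one, since an \emph{unweighted} $\|\partial_r u\|$ now appears; for it I would use the companion identity obtained by pairing $g$ with $u$: discarding the nonnegative $h^2r^{-2}|\mathcal L^{1/2}u|^2$ term (the $\varepsilon$-term drops out and the $h\varphi''$-terms cancel) gives $h^2\|\partial_r u\|^2\le\Re\langle g,u\rangle+\int(E-V+(\varphi')^2)|u|^2\le\|mg\|\,\|u\|+C\|u\|^2$, since $E-V+(\varphi')^2$ is bounded. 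Inserting this and using Young's inequality with a parameter tuned to the size of $\varepsilon h$ produces exactly an admissible $C\varepsilon h^{-1}\|u\|^2$ term plus an absorbable $Ch^{-2}\|mg\|^2$ term as long as $\varepsilon\lesssim h^{-1}$; and for $\varepsilon\gtrsim h^{-1}$, \eqref{e:lcarl} is trivial since $\|m^{-1}u\|^2\le\|u\|^2\le C\varepsilon h^{-1}\|u\|^2$. Undoing the reduction then gives \eqref{e:lcarl}.

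The main point to get right is that this must be the \emph{bare} multiplier: the usual symmetrized choice $w\,\partial_r u+\tfrac12 w'u$ would produce $w$ times $\partial_r$ of the function in \eqref{e:lweights}, which has no definite sign (it involves $\partial_r V$, which is only bounded), whereas the bare multiplier produces $\partial_r$ of $w$ times that function, which is exactly what \eqref{e:lweights} bounds below. Once this is in place the remaining issues are routine: the vanishing (and convergence) of the $r=0$ boundary terms and the positivity of the angular term, both resting on $\mathcal L\ge0$ --- hence on $n\ne2$ --- and the uniform-in-$\varepsilon$ control of the current via the auxiliary energy identity and the split at $\varepsilon\sim h^{-1}$.
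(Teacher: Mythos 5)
Your proof is correct and is essentially the paper's argument in different notation: pairing $P_\varphi u$ with the bare multiplier $w\,\partial_r u$ and taking twice the real part is exactly the paper's computation of $\int_0^\infty (w F)'\,dr$ for $F=\|h\partial_r u\|_S^2-\langle(\Lambda+V_\varphi-E)u,u\rangle_S$, and your companion identity for $h^2\|\partial_r u\|^2$ is the paper's \eqref{e:epsrem2}. The key structural point you emphasize --- that the bare (unsymmetrized) multiplier produces $\partial_r(w(E-V_\varphi))$, which is what Lemma \ref{l:weights} controls --- is precisely how the paper uses \eqref{e:lweights}, and your handling of the $\varepsilon$-term (splitting at a threshold where the estimate is trivial) matches the paper's reduction to $\varepsilon\le h$.
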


\begin{proof}[Proof of Lemma \ref{l:weights}] 
For  $B, \, R, \, R_0$  (depending on $\delta$) to be determined later, put
\[
\psi = \psi_\delta(r) := \begin{cases}  \delta_0^{-1} , &r \le R, \\  \frac B {w(r)} - \frac E 4 , & R < r < R_0, \\ 0, &r \ge R_0,\end{cases}
\]
We will show  that, for $\delta$  small enough, there are $B, \, R, \, R_0$ which make $\psi$ continuous and
\begin{equation}\label{e:psi0}
- E/2 \le    \psi -  V -  (\partial_r V - \psi')w/w', \qquad r>0, \ r \ne R, \  r \ne R_0.
\end{equation}
Suppose for a moment that this is done. Fix $\rho \in C_0^\infty((0,\infty))$ with $\rho \ge 0$, $\int \rho = 1$, and for $\eta>0$, put $\rho_\eta(r) = \rho(r/\eta)/\eta$. If $\eta$ and $h_0$ are sufficiently small, then  we may take 
\[
\varphi(r) := \int_0^r \widetilde \psi(t)dt, \qquad \widetilde \psi := \rho_\eta * \sqrt{\psi}.
\]
It remains to find  $B$, $R$, and $R_0$ such that $\psi$ is continuous and satisfies \eqref{e:psi0}. 

Note that, by \eqref{e:vbound}  we have
\[
V + (\partial_r V) w/w' \le G_\delta(r) :=  (1+r)^{-\delta_0} + \delta^{-1}(1 - (1+r)^{-\delta})(1+r)^{\delta-\delta_0},
\]
and
\[
G'_\delta(r) = (\delta^{-1} - 1)\delta_0(1+r)^{-1-\delta_0} -\delta^{-1}(\delta_0-\delta)(1+r)^{-1-\delta_0+ \delta}.
\]
So, for each $\delta \in (0,\delta_0)$, $G_\delta$ attains its maximum value at $r_{\max}$ which is given by
\begin{equation}\label{e:gmax}
(1+r_{\max})^\delta :=(1-\delta)/(1-\delta/\delta_0) = 1 + \delta(\delta_0^{-1} - 1) + O(\delta^2).
\end{equation}
Hence we have, for all $r>0$,
\[\begin{split} 
G_\delta(r) &= (1+r)^{-\delta_0} \left(1 - \delta^{-1} + \delta^{-1}(1+r)^\delta \right) \\&\le \left(1 - \delta(\delta_0^{-1} - 1) + O(\delta^2)\right)^{\delta_0/\delta} \delta_0^{-1}(1+O(\delta)).
\end{split}\]
Since $(1-x)^{1/x} \le 1/e$ for $x > 0$ and since $\delta_0<1$, this implies, for $\delta$ small enough,
\[
G_\delta(r) \le  e^{-(1-\delta_0) + O(\delta)} \delta_0^{-1}(1+O(\delta)) \le \delta_0^{-1}.
\]
Consequently, regardless of the value of $R$, we  have, for $r < R$,
\begin{equation}\label{e:rsmall}
 \psi -  V -  (\partial_r V - \psi')w/w' 
 \ge \delta_0^{-1}  - G_\delta(r) \ge 0,
\end{equation}
which implies \eqref{e:psi0} for $r<R$.
We will take $R>0$ large enough that
\begin{equation}\label{e:r1}
r \ge R \Longrightarrow G_\delta(r) \le E/4.
\end{equation}
First let us see that \eqref{e:r1} 
implies \eqref{e:psi0} for $r>R$. Indeed, for $r > R_0$, \eqref{e:r1} implies
\[
\psi_0 -  V - (\partial_r V - \psi'_0)w/w'  =   -  V - (\partial_r V)w/w'  \ge - G_\delta(r) \ge -E/4,
\]
while, if $R < r < R_0$, we have $ \psi_0 + \frac w {w'} \psi_0' = -E  /4$, and hence \eqref{e:r1} implies
\[
\psi_0 -  V -  (\partial_r V - \psi'_0)w/w'\ge - E/4 - G_\delta(r)   \ge -E/2.
\]
Next  note that, for any $R>0$,  $\psi$ is continuous if and only if we take $B$ and $R_0$ such that
\[
B = (\delta_0^{-1} + E/4)w(R), \qquad w(R_0) = 4B/E = (1 + 4 \delta_0^{-1} E^{-1})w(R).
\]
Since $w$ takes values strictly between $0$ and $1$, this is possible if and only if
\begin{equation}\label{e:r2}
w(R) < 1/(1+4\delta_0^{-1}E^{-1}).
\end{equation}
Consequently, to complete the construction, it suffices to show that, if $\delta$ is small enough, then there is $R>0$ such that \eqref{e:r1} and \eqref{e:r2} hold. Define $R$ by
\[
(1+R)^{\delta-\delta_0} := \delta E/4,
\]
so that
\[
G_\delta(R) \le \delta^{-1} (1+R)^{\delta-\delta_0} = E/4.
\]
Note that, for $\delta>0$ sufficiently small we have, by \eqref{e:gmax},
\[
(1+R)^\delta = (\delta E/4)^{-\delta/(\delta_0-\delta)} = 1 + \delta_0^{-1} \delta |\ln \delta| + O(\delta) > (1+ r_{\max})^\delta.
\]
So $G_\delta'(R)<0$ for $r \ge R$, and we have \eqref{e:r1}. 
Similarly,
\[
w(R) = 1 - (1 + R)^{-\delta} = O(\delta|\ln\delta|),
\]
so this choice of $R$ also gives \eqref{e:r2}  for $\delta>0$ sufficiently small, as desired.
\end{proof}

\begin{proof}[Proof of Lemma \ref{l:carl}] 
Let
\[\begin{split}
P_\varphi :=\, &e^{\varphi/h} r^{(n-1)/2} (P  - i \varepsilon) r^{-(n-1)/2} e^{-\varphi/h} \\= &- h^2 \partial_r^2  + 2h \varphi' \partial_r + \Lambda + V_\varphi - E - i \varepsilon, 
\end{split}\]
where
\[\begin{split}
0 \le \Lambda &:=\begin{cases}0, & n=1, \\ h^2 r^{-2} \left(-\Delta_{\mathbb S^{n-1}} + (n-1)(n-3)/4\right), & n\ge 3,\end{cases} \\
V_\varphi &:= V - \varphi'^2 + h\varphi''.
\end{split}\]
Let $\int_{r,\omega}$  denote the integral over  $(0,\infty) \times \mathbb S^{n-1}$ with respect to  $drd\omega$, where $d\omega$ is the usual measure on the unit sphere $\mathbb S^{n-1}$.  Then \eqref{e:lcarl} is equivalent to
\begin{equation}\label{e:carpphi}
\int_{r,\omega} w'  |u|^2 \le \frac {C}{h^2} \int_{r,\omega} \frac{|P_\varphi u|^2}{w'}  + \frac {C\varepsilon}h \int_{r,\omega}  |u|^2, \qquad u \in e^{\varphi/h} r^{(n-1)/2}C_0^\infty(\mathbb R^n).
\end{equation}
We may assume $\varepsilon \le h$, since $w' \le1$ makes \eqref{e:carpphi} trivial for $\varepsilon > h $.
We will prove
\begin{equation}\label{e:careps}
\int_{r,\omega} \partial_r \left(w(E-V_\varphi)\right) |u|^2    \le \frac {2}{h^2} \int_{r,\omega} \frac{|P_\varphi u|^2}{w'}  + \frac {C\varepsilon}h \int_{r,\omega}  |u|^2,
\end{equation}
which, together with \eqref{e:lweights}, implies \eqref{e:carpphi}.
In the spirit of \cite{cv, rt, voasy, vo}, put
\[
F(r) := \| h\partial_r u(r,\omega)\|^2_S - \langle (\Lambda + V_\varphi(r,\omega)  - E)u(r,\omega),u(r,\omega)\rangle_S, \qquad r>0,
\]
where $\| \cdot \|_S$ and $\langle \cdot, \cdot \rangle_S$ are the norm and inner product in $L^2(\mathbb S^{n-1})$. 
Note that 
\begin{equation}\label{e:wfint}
\int_0^\infty (w(r)F(r))' dr \le - \lim_{r \to 0} w(r) \liminf_{r \to 0} F(r) = 0. 
\end{equation}
We use the selfadjointness of $\Lambda + V_\varphi - E$ to compute the derivative of $F$ in terms of $P_\varphi$:
\[\begin{split}
F'   &= 2  \re \langle h^2u'',u'\rangle_S  - 2 \re \langle(\Lambda + V_\varphi - E)u,u' \rangle_S +  2r^{-1} \langle \Lambda u,u\rangle_S - \langle V_\varphi' u,u\rangle_S\\
&= - 2 \re \langle P_\varphi u, u'\rangle_S + 4 h\varphi' \|   u' \|_S^2 + 2\varepsilon  \im \langle u,  u' \rangle_S  + 2r^{-1} \langle \Lambda u,u\rangle_S - \langle  V_\varphi' u,u\rangle_S, \\
\end{split}\]
where $u': =\partial_r u$ and $V_\varphi' : = \partial_r V_\varphi$. Consequently
\[\begin{split}
w F' + w' F = &- 2w \re \langle P_\varphi u,  u' \rangle_S + \left(4h^{-1} w \varphi' + w' \right)\|h u'\|_S^2  + 2w\varepsilon \im \langle u, u'\rangle_S \\
& + \left(2wr^{-1} - w'\right) \langle \Lambda u,u\rangle_S + \langle  \left(w(E-V_\varphi)\right)'u,u\rangle_S .
\end{split}\]
Using $w \varphi'  \ge 0$, $w'>0$, $\Lambda \ge 0$, $2wr^{-1} - w'>0$, and  $ - 2 \re \langle a,   b \rangle +  \|b\|^2 \ge -\|a\|^2 $, we obtain
\[ \begin{split}
w F' + w' F \ge & -  \frac{w^2}{h^2w'} \|P_\varphi u\|_S^2 + 2w\varepsilon \im \langle u, u'\rangle_S +  \langle\left(w(E-V_\varphi)\right)'u,u\rangle_S.
\end{split}\]
Combining this with \eqref{e:wfint} and using $w \le 1$  gives
 \begin{equation}\label{e:epsrem1}\begin{split}
\int_{r,\omega} \left(w(E-V_\varphi)\right)' |u|^2  \le 
\frac {1}{h^2} \int_{r,\omega} \frac{|P_\varphi u|^2}{w'}  + & 2 \varepsilon \int_{r,\omega} |u  u'| . 
\end{split}\end{equation}
On the other hand, for all $\gamma>0$ there is $C_\gamma$ such that
 \[\begin{split}
 \int_{r,\omega} |hu'|^2 &= \re \int_{r,\omega} \bar u ( P_\varphi -  2h\varphi'\partial_r - \Lambda - V_\varphi + E + i \varepsilon)u 
\\ &\le \int_{r,\omega} |P_\varphi u| |u|   + 2    \int_{r,\omega}  \varphi' |h  u'|  |u|  +   \int_{r,\omega}  |E - V_\varphi| |u|^2 
\\ & \le \int_{r,\omega} |P_\varphi u|^2  +   C_\gamma \int_{r,\omega}  |u|^2  + \gamma  \int_{r,\omega} \varphi' |h u'|^2.
\end{split}\]
Choosing $\gamma = 1/(2\max \varphi')$ gives
 \begin{equation}\label{e:epsrem2}\begin{split}
 \int_{r,\omega} |hu'|^2 \le 2\int_{r,\omega} |P_\varphi u|^2  +   C \int_{r,\omega}  |u|^2 .
\end{split}\end{equation}
Applying $2  \int_{r,\omega}  |u u'|  \le h^{-1}\int_{r,\omega} |u|^2 + h^{-1} \int_{r,\omega}  |hu'|^2 $ to \eqref{e:epsrem1}, and using  \eqref{e:epsrem2} and  $\varepsilon \le h$, gives \eqref{e:careps}.
\end{proof}

\begin{proof}[Proof of Theorem] 
Put $C_0 = 2 \max \varphi$. Then, since $\varphi(r) = C_0$ for $r \ge R_0$, \eqref{e:lcarl} implies
\[\begin{split}
e^{-C_0/h}\left\| m^{-1} \mathbf{1}_{\le R_0} v\right\|^2_{L^2}  +  \left\| m^{-1}  \mathbf{1}_{\ge R_0} v\right\|^2_{L^2}  & \le e^{-C_0/h}\left\| m^{-1}   e^{\varphi/h}v\right\|^2_{L^2} \\
&\le  \frac {C}{h^2}\left\|m(P-i\varepsilon)v\right\|^2_{L^2} + \frac {C_1\varepsilon}h \|v\|^2_{L^2},
\end{split}\]
where we abbreviated $L^2(\mathbb R^n)$ as $L^2$.
Then using
\[\begin{split}
2 \varepsilon \|v\|^2_{L^2} = & -2 \im \langle (P- i\varepsilon)v,v\rangle_{L^2} \le  \gamma^{-1} \left\|m \mathbf{1}_{\ge R_0} (P  - i \varepsilon) v\right\|^2_{L^2} + \\  & \gamma \|m^{-1} \mathbf{1}_{\ge R_0} v\|^2_{L^2} +  \gamma_0^{-1} \left\| m\mathbf{1}_{\le R_0} (P  - i \varepsilon) v\right\|^2_{L^2} +    \gamma_0 \|m^{-1} \mathbf{1}_{\le R_0} v\|^2_{L^2},
\end{split}\]
with $\gamma = e^{-2C_0/h}$  and   $\gamma_0 = h/C_1$  we  conclude that, for $h$ sufficiently small,
\begin{equation}\label{e:opest}\begin{split}
e^{-C/h}\left\|  m^{-1} \mathbf{1}_{\le R_0} v\right\|^2_{L^2}  +  \left\| m^{-1}  \mathbf{1}_{\ge R_0} v\right\|^2_{L^2} &\le \\   e^{C/h} \left\|m\mathbf{1}_{\le R_0} (P  - i \varepsilon) v\right\|^2_{L^2}  &+  \frac {C}{h^2}\left\|m \mathbf{1}_{\ge R_0}(P-i\varepsilon)v\right\|^2_{L^2},
\end{split}\end{equation}
for all $v \in C_0^\infty(\mathbb R^n)$. We will deduce from \eqref{e:opest} that, for any $f \in L^2$, we have
\begin{equation}\label{e:resest}\begin{split}
e^{-C/h}\left\|  \mathbf{1}_{\le R_0} (P-i\varepsilon)^{-1} m^{-1} f\right\|^2_{L^2}  +  \left\| m^{-1}  \mathbf{1}_{\ge R_0} (P-i\varepsilon)^{-1} m^{-1} f\right\|^2_{L^2} &\le   \\   e^{C/h} \left\|\mathbf{1}_{\le R_0} f \right\|^2_{L^2}  &+  \frac {C}{h^2}\left\|\mathbf{1}_{\ge R_0}f \right\|^2_{L^2},
\end{split}\end{equation}
from which the Theorem follows.
For this we need the fact that, for fixed $\varepsilon, h>0$,
\begin{equation}\label{e:bothbounded}
\frac 1 {C_{\varepsilon, h}} \|m v\|_{H^2}\le  \|m(P-i\varepsilon)v\|_{L^2} \le C_{\varepsilon, h} \|mv\|_{H^2}, \qquad mv \in H^2.
\end{equation}
Momentarily assuming \eqref{e:bothbounded}, fix  $f \in L^2$, so   $m(P-i\varepsilon)^{-1} m^{-1} f \in H^2$. Take $v_k \in C_0^\infty$ with
\[
\|mv_k - m(P-i\varepsilon)^{-1} m^{-1} f\|_{H^2} \to 0 \textrm{ as } k \to \infty.
\]
Then in particular $\|m^{-1} v_k - m^{-1}  (P-i\varepsilon)^{-1} m^{-1}f\|_{L^2} \to 0$, and, by \eqref{e:bothbounded}, 
\[
\|m(P - i \varepsilon) v_k  -  f\|_{L^2}  \le C_{\varepsilon, h} \|mv_k - m(P-i\varepsilon)^{-1} m^{-1} f\|_{H^2} \to 0 \textrm{ as } k \to \infty.
\]
Consequently  \eqref{e:resest} follows by applying \eqref{e:opest} wtih $v_k$  in place of $v$, and letting $k \to \infty$.

It remains to prove \eqref{e:bothbounded}. Below, $a \lesssim b$ means $a \le C b$ with $C$ depending on $\varepsilon$ and $h$ (but not $v$). By the Kato--Rellich Theorem, $(P-i\varepsilon)^{-1}$ is bounded $L^2 \to H^2$, so
\begin{equation}\label{e:kr}
 \|mv\|_{H^2} \lesssim  \|(P-i\varepsilon) mv\|_{L^2} \lesssim  \|mv\|_{H^2},
\end{equation}
for all $v$ with $m v \in H^2$. Meanwhile,
$[P,m] = -2h^2 m' \partial_r -h^2m''-h^2(n-1)m'/r$ is bounded  $H^2 \to L^2$,
  allowing us to deduce the second of \eqref{e:bothbounded} from the second of \eqref{e:kr}:
 \[
 \| m(P-i\varepsilon)v\|_{L^2} \lesssim \|mv\|_{H^2} +  \|[P,m]v\|_{L^2} \lesssim  \|mv\|_{H^2}.
\]
Similarly we deduce the first of  \eqref{e:bothbounded} from the first of \eqref{e:kr}:
\[
\|m v\|_{H^2} \lesssim \|m(P-i\varepsilon)v\|_{L^2} +  \|[P,m]v\|_{L^2}  \lesssim \|m(P-i\varepsilon)v\|_{L^2}.
\]
\end{proof}

\def\arXiv#1{\href{http://arxiv.org/abs/#1}{arXiv:#1}}


\begin{thebibliography}{0}
\bibitem[Bu1]{bu0}
 Nicolas Burq. D\'ecroissance de l'\'energie locale de l'\'equation des ondes pour le probl\`eme ext\'erieur et absence de r\'esonance au voisinage du r\'eel. 
 \textit{Acta Math.} 180:1, 1--29, 1998.
 
\bibitem[Bu2]{bu}
Nicolas Burq.
\newblock Lower bounds for shape resonances widths of long range
  {S}chr\"odinger operators.
\newblock {\em Amer. J. Math.}, 124:4, 677--735, 2002.


\bibitem[CaVo]{cv} Fernando Cardoso and Georgi Vodev.
\newblock Uniform estimates of the resolvent of the {L}aplace-{B}eltrami
  operator on infinite volume {R}iemannian manifolds {II}.
\newblock {\em Ann. Henri Poincar\'e} 3:4, 673--691, 2002.

\bibitem[Ch]{ch} Hans Christianson. High-frequency resolvent estimates on asymptotically Euclidean warped products. \arXiv{1303.6172}.

\bibitem[DaVa]{dv} Kiril Datchev and Andr\'as Vasy.  Propagation through trapped sets and semiclassical resolvent estimates. \textit{Ann. Inst. Fourier} 62:6, 2347--2377, 2012. Semiclassical resolvent estimates at trapped sets.  \textit{Ann. Inst. Fourier} 62:6, 2379--2384, 2012.

\bibitem[Dy]{dy} Semyon Dyatlov. Resonance projectors and asymptotics for r-normally hyperbolic trapped sets. \arXiv{1301.5633}.

\bibitem[HiVa]{hv} Peter Hintz and Andr\'as Vasy. Non-trapping estimates near normally hyperbolic trapping. \arXiv{1311.7197}.

\bibitem[NoZw]{nz} St\'ephane Nonnenmacher and Maciej Zworski. Decay of correlations for normally hyperbolic trapping.  \arXiv{1302.4483}.

\bibitem[RoTa]{rt} Igor Rodnianski and Terence Tao. Effective limiting absorption principles, and applications.  \arXiv{1105.0873}.

\bibitem[Sj]{sres} Johannes Sj\"ostrand. Lectures on resonances.  
\href{http://sjostrand.perso.math.cnrs.fr/Coursgbg.pdf}{http://sjostrand.perso.math.cnrs.fr/Coursgbg.pdf}.

\bibitem[Vo1]{vo0} Georgi Vodev. Exponential bounds of the resolvent for a class of noncompactly supported perturbations of the Laplacian. \textit{Math. Res. Lett}, 7:3, 287--298, 2000.

\bibitem[Vo2]{voasy} Georgi Vodev. Semi-classical resolvent estimates for Schr\"odinger operators. \textit{Asymptot. Anal.} 81:2, 157--170, 2013.

\bibitem[Vo3]{vo} Georgi Vodev. Semi-classical resolvent estimates and regions free of resonances. To appear in \textit{Math. Nach.}

\bibitem[Wu]{wu} Jared Wunsch. Resolvent estimates with mild trapping. \textit{Journ\'ees \'equations aux d\'eriv\'ees partielles}. Exp. No. 13, 15 p., 2012. 


\end{thebibliography}
\end{document}